\documentclass[12pt,a4paper]{article}
\usepackage[top=1in, bottom=1in, right=1in, left=1in, includefoot]{geometry}
\usepackage{mathtools}
\usepackage[english]{babel}
\usepackage{amsthm}

\newcommand\ov{\overline}

\newcommand\R{\mathbb R}

\newcommand\C{\mathbb C}

\newcommand\p{\partial}

\renewcommand\l{\left}
\renewcommand\r{\right}
\allowdisplaybreaks
\renewcommand\phi\varphi
\newcommand\eq[2]{\begin{equation}#1\label{#2}\end{equation}}

\newcommand\Ra\Rightarrow
\renewcommand\a\alpha
\renewcommand\b\beta
\renewcommand\t[1]{\smash{\tilde{#1}}\vphantom{#1}}
\newcommand\h[1]{\smash{\hat{#1}}\vphantom{\rule{1pt}{11pt}{#1}}}

\usepackage[dvipsnames]{xcolor}

\usepackage{fancyhdr}
\newtheorem{theorem}{Theorem}
\newtheorem{lemma}{Lemma}

\usepackage{biblatex}
\newcommand\D{\mathbb D}
\newcommand\T{\mathbb T}

\newcommand\I{\mathcal I}

\usepackage{stix2}

\usepackage[integrals]{wasysym}

\begin{document}
    \title{Chui conjecture proof}
    \thispagestyle{empty}
    \ 
    \begin{center}
        \uppercase{Proof of the generalized Chui conjecture}

        \ 

        Egor Bezrukov
    \end{center}
    \ 
    \begin{abstract}
        \noindent We prove Chui's conjecture on sums of Cauchy kernels. It states that among all configurations of Cauchy kernels on the unit disk, the equispaced one minimizes $L^1$ norm of the sum in the unit disk. Moreover, we obtain a generalization to  a wider set of functionals, including weighted $L^p$ norms for $p > 1$ with radially symmetric weights.
    \end{abstract}

    \section{Introduction}
    The Chui's conjecture, introduced in 1971 \cite{chui1971lower}, in its physical interpretation, states that among all distributions of $n$ unit electrical charges on the unit circle of a 2d-plane, the equispaced one (where the charges are at the vertices of a regular n-gon) minimizes $L^1$ norm of electrical field strength in the unit disk. Note that on a 2d-plane, electrical field force is proportional to $r^{-1}$, where $r$ is the distance between charges, unlike in 3d-case where it's proportional to $r^{-2}$. This is dictated by the harmonicity of the electrical potential. In the complex analysis the statement of  the conjecture is naturally formulated in terms of $L^1$-norm on the unit disk and Cauchy kernel sums as follows.
    Define the functional
    \eq{I_n(a) = \int_{|z| \leq 1} \l| \sum_{k=1}^n \frac1{z - a_k} \r| dA,}{chui}
    where $a=(a_1,\dots, a_n)\in\mathbb C^n$, $|a_k| = 1$, $k=1,\dots,n$, $ dA$ denotes an area measure. Then, for any integer $n \geq 1$, $I_n$ attains its minimum at the equispaced arrangement of poles:
    \eq{a_k = \exp(2 k \pi i / n),\  k=1,\dots,n .}{conda}

    \textbf{Known results.} Next year after Chui's publication, D. J. Newman \cite{newman1972lower} obtained a lower bound  $L^1$-norms:
    \[\l\Vert \sum_{k=1}^n \frac1{z - a_k} \r\Vert_{L^1(\D)} \geq \frac\pi{18},\]
    where $|a_k| = 1,$ $k=1,\dots, n$.
    
    In 2020, E. Abakumov, A. Borichev and K. Fedorovskiy \cite{abakumov2021chui} proved a variation 
    of Chui's conjecture for weighted $L^2$-norms. Let $g : [0, 1] \to \R, g \not\equiv 0$ be a non-decreasing function that $\int_0^1 \frac{g(s)}s ds < \infty$. Then,
    \[\int_\D \l|\sum_{k=1}^n \frac1{z - a_k}\r|^2 g(1 - |z|^2) dA\]
    attains its minimum if and only if $\{a_k\}$ are equispaced.

    In this paper, we prove the Chui's conjecture. In fact, we establish the following more general result.
    \begin{samepage}
    \begin{theorem}
        \label{tgen}
        Let $\rho : [0, 1] \to [0, +\infty)$ be a measurable positive weight such that
        \[\int_0^1 \rho(r) r dr < \infty ,\]
        and $\Psi : \R \to \R$ be an even convex function. Set
        \eq{\I_n(a,b) = \int_{|z| \leq 1} \rho(|z|) \Psi\l(\l| \frac{\prod_{k=1}^{n-1} (z - b_k)}{\prod_{k=1}^n (z - a_k)} \r|\r) dA}{gen}
        where $a=(a_1,\dots ,a_n)\in\mathbb C^n,$ and $b=(b_1, \dots , b_{n-1})\in\mathbb C^{n-1}$  with $|a_k| = 1$ and no restriction on $b_k$. Then,
         for any integer $n \geq 1$, $\I_n$ attains its minimum at
        \eq{\begin{aligned}
            &a_k = \exp(2 k \pi i / n), &&k=1,\dots, n,\\
            &b_j = 0,                   &&j=1,\dots, n-1.
        \end{aligned}}{condb}
    \end{theorem}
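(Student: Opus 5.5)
The plan is to reduce the problem to circles and use symmetrization in the angular variable. Write $z = re^{i\theta}$, so that
\[
\I_n(a,b)=\int_0^1 \rho(r)\, r\, J_r(a,b)\,dr,
\qquad
J_r(a,b)=\int_0^{2\pi}\Psi\bigl(|R(re^{i\theta})|\bigr)\,d\theta,
\]
where $R=P/Q$, $P(z)=\prod_{k=1}^{n-1}(z-b_k)$ and $Q(z)=\prod_{k=1}^n (z-a_k)$. It suffices to show that for every fixed $r\in(0,1)$ the configuration \eqref{condb} minimizes $J_r$. There $R_0(z)=z^{n-1}/(z^n-1)$ and $|R_0(re^{i\theta})|=r^{n-1}/|r^ne^{in\theta}-1|$.

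\textbf{Step 1: reduce to the absolute value and to $\Psi$ nondecreasing.} Since $\Psi$ is even and convex, it is nondecreasing on $[0,\infty)$, and only its values there matter. Pulling the zeros $b_k$ inward can only help, so we would first like to replace $b_k$ by $0$.

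\textbf{Step 2: show the zeros may be taken at $0$.} Take the partial fractions $R(z)=\sum_k c_k/(z-a_k)$ with $\sum_k c_k=1$; this is the leading-coefficient condition, which holds for $\deg P=n-1$. Then on the circle $|z|=r$ we get the identity
\[
\frac{1}{2\pi}\int_0^{2\pi} R(re^{i\theta})\,e^{i(n-1)\theta}\cdots
\]
which is awkward to use directly. Instead, substitute $w=1/z$ and define
\[
F(w)=w^{-1}R(1/w)=\frac{w^{-n}\,\tilde P(w)}{w^{-n}\,\tilde Q(w)}=\frac{\tilde P(w)}{\tilde Q(w)},
\]
where $\tilde P(w)=\prod_k(1-b_kw)$ and $\tilde Q(w)=\prod_k(1-a_kw)$. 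For $|w|=1/r>1$ we have $|R(z)|=|w|\,|F(w)|$. The function $F$ is rational with $F(0)=1$, and its poles $\bar a_k$ lie on the unit circle. On the circle $|w|=s=1/r$, however, $F$ is not analytic inside, so subharmonicity cannot be applied directly. This leads to the main idea.

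\textbf{Main step: a Fourier-coefficient argument on $|z|=r<1$.} On $|z|=r<1$, expand each factor as a geometric series: $1/(z-a_k)=-\sum_{m\ge0}z^m\bar a_k^{\,m+1}$. Hence $R(z)=\sum_{m\ge0}\gamma_m z^m$ is analytic in $|z|<1$, and the lowest available normalization is the coefficient
\[
\gamma_{n-1}+\gamma_{2n-1}r^n+\cdots,
\]
which does not make the structure visible. The structure we want is this: for the extremal configuration, $R_0(z)=-z^{n-1}\sum_{j\ge0}z^{nj}$. The key claim is then a majorization statement. For each $r$, the distribution function of $|R(re^{i\theta})|$ dominates, in the convex (Hardy--Littlewood--P\'olya) order, that of $|R_0(re^{i\theta})|$. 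Equivalently, we need $\int\Psi(|R|)\ge\int\Psi(|R_0|)$ for all convex $\Psi$. My first attempt at proving this is a conditional-expectation (averaging) argument. Average $R$ over the rotation group $z\mapsto \omega z$ with $\omega^n=1$, twisted by $\omega^{-(n-1)}$:
\[
\frac1n\sum_{j}\omega^{-j(n-1)}R(\omega^j z)=\sum_{m\equiv n-1 \ (\mathrm{mod}\ n)}\gamma_m z^m.
\]
Averaging is a contraction in the convex order, so by Jensen
\[
J_r(R)\ge J_r\Bigl(\sum_{m\equiv -1}\gamma_m z^m\Bigr).
\]
It would then remain to show that among functions of the form $z^{n-1}h(z^n)$ arising this way, $|h|$ is minimized in convex order by $h(u)=-1/(1-u)$. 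The relevant facts are $\gamma_{n-1}=-\sum_k c_k\bar a_k^{\,n}$ and its relatives $\gamma_{nj+n-1}=-\sum_k c_k\bar a_k^{\,n(j+1)}$. Thus $h(u)=-\sum_k c_k\bar a_k^{\,n}/(1-\bar a_k^{\,n}u)$, which is a one-variable problem of the same type, with poles $a_k^n$ and weights $c_k$. The expected main obstacle is exactly this last step. One needs that a function of the form $-\sum_k c_k\bar\alpha_k/(1-\bar\alpha_k u)$ with $\sum_k c_k=1$ dominates $1/|1-u|$ in convex order on $|u|=r^n$. I would try to prove it by a further rotation averaging combined with the constraint $\sum_k c_k=1$, which fixes the value at $u=0$. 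If that fails, I would handle it via subordination, using the Littlewood subordination principle applied to $\Psi(|\cdot|)$, which is subharmonic.
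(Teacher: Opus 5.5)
\textbf{There is a genuine gap: the inequality your main step reduces to is false.} Your reduction to circles and the twisted rotation average are correct. With $\omega=e^{2\pi i/n}$ and $R=\sum_k c_k/(z-a_k)$, the average $A(z)=\frac1n\sum_{j}\omega^{-j(n-1)}R(\omega^j z)=z^{n-1}h(z^n)$, where $h(u)=\sum_k c_k/(u-a_k^n)$, satisfies $J_r(A)\le J_r(R)$, because $w\mapsto\Psi(|w|)$ is convex on $\C$. The trouble is the step you flag as the main obstacle: $J_r(A)\ge J_r(R_0)$ is not merely hard, it fails, so the chain $J_r(R)\ge J_r(A)\ge J_r(R_0)$ cannot be closed.

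Take $n=2$, $a=(1,i)$ and $c=(\tfrac12,\tfrac12)$, i.e.\ $b_1=\tfrac{1+i}{2}$. Then $a_k^2=\pm1$, $h(u)=-u/(1-u^2)$ and $A(z)=-z^3/(1-z^4)$, while $R_0(z)=-z/(1-z^2)$. For $\Psi(x)=x^2$,
\[
J_r(A)=\frac{2\pi r^6}{1-r^8}<\frac{2\pi r^2}{1-r^4}=J_r(R_0).
\]
The same strict inequality holds for $\Psi(x)=|x|$. Indeed $J_r(A)=2\pi r^3F(r^4)$ and $J_r(R_0)=2\pi rF(r^2)$, where $F(s)=\frac1{2\pi}\int_0^{2\pi}|1-se^{i\phi}|^{-1}d\phi$ is increasing.

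The averaging discards the Fourier modes $m\not\equiv n-1 \pmod n$, and these are exactly what make $J_r(R)\ge J_r(R_0)$ true; in this example $J_r(R)-J_r(R_0)=\pi(1-r^2)^2/(1-r^8)\ge0$. Since the reduced inequality is false, neither a further rotation average nor subordination can rescue it. Littlewood's principle would point the wrong way anyway: it bounds the integral means of the subordinate function from above.

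Your Steps 1--2 do not help either. For non-equispaced $a$ the best zeros are not at $0$, and they depend on $r$. For $a=(1,i)$ and $\Psi(x)=x^2$, the unique optimal zero on $|z|=r$ is $b_1=(1+i)r^2/(1+r^2)$.

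\textbf{The missing idea.} On each circle, compare not with a single competitor but with the minimum over all numerators for the given poles, and show that this minimum does not depend on where the poles are. This is the paper's Theorem 2, which runs as follows.
\begin{enumerate}
\item After rescaling to $\T$ with poles $\alpha_k=a_k/r$, the functional is convex in the partial-fraction weights $v$, subject to $\sum v_k=1$.
\item The explicit candidate is $Q^0=1/(R^0-\prod\alpha_k)$, where $R^0$ is a degree-$n$ Blaschke product with $R^0(0)=0$. It is a critical point by Lemma 1: for a Blaschke product $T$ with zeros $s_k$, the integral $\int_\T f(T(z))(1-\overline{s_k}z)^{-1}|dz|$ is independent of $k$. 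By convexity it is therefore a minimizer.
\item Because $R^0$ preserves arc length on $\T$, the minimal value is $\int_\T\Phi\bigl(1/\bigl|z-\prod_k|\alpha_k|\bigr|\bigr)|dz|$. This depends only on $\prod_k|\alpha_k|=r^{-n}$, so it is the same for every admissible $a$.
\end{enumerate}
The equispaced configuration with $b=0$ attains this common value on every circle simultaneously, which gives the theorem.
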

    \end{samepage}
    Note that, without an additional condition on $\Psi$, $\I_n$ may equal $+\infty$.

    The original Chui conjecture is a particular case of the assertion of Theorem 1. Indeed, reduce the Cauchy kernel sum to a common denominator:
    \eq{\sum_{k=1}^n \frac1{z - a_k} = \frac{n p_{n-1}(z)}{\prod_{k=1}^n (z - a_k)} = n \frac{\prod_{k=1}^{n-1} (z - b_k)}{\prod_{k=1}^n (z - a_k)},}{cdm}
    for some monic polynomial $p_{n-1}$ and its roots $b_k, k = 1 .. n-1$, depending on the choice of $a_k$. 
    For $a_k = \exp(2 k \pi i / n)$ we get
    \[\sum_{k=1}^n \frac1{z - a_k} = \frac{\sum_{k=1}^n \prod_{j=1}^{n-1} (z - \exp(2 (k + j) \pi i / n))}{\prod_{k=1}^n (z - \exp(2 k \pi i / n))} = n \frac{z^{n-1}}{\prod_{k=1}^n (z - a_k)},\]
   that  corresponds to $b_k = 0$, $k=1,\dots, n-1.$

    Now it is easy to see that after setting $\rho(r) = 1/n$ and $\Psi(x) = |x|$, (\ref{gen}) becomes the expanded form of (\ref{chui}) with condition (\ref{condb}) corresponding to condition (\ref{conda}). So if $\I_{n}$ attains its minimum at (\ref{condb}) then $I_n$ must also attain its minimum at (\ref{conda}) because it's the same functional on a restricted (by the $a_k \to b_k$ mapping) domain.

    Setting $\Psi(x) = |x|^p$, $p \geq 1$ we also have a variation of Chui's conjecture for $L^p$-norm, that contains  the case proved in [1].

    The key role will play that  for an even convex function $\Phi$ and a quotient  of monic polynomials $P$ having $n$ fixed (!) poles $\{\a_k\}$ outside the unit disk and
    $n - 1$ arbitrary zeros $\{\b_k\}$, the minimum of $\int_{|z|=1} \Phi(|P(z)|) |dz|$ over $(\b_k) \in \C^{n-1}$ does not depend on the particular configuration of $\a_k$
    but only depends on $\prod |\a_k|$.

    \section{Proof overview}

    First we reformulate the conjecture to a more general form as already shown above. Reduction to common denominator is a natural step because the modulus of a sum in the original conjecture has little to no good properties, whereas modulus of a product is much better as the modulus is a multiplicative function.

    We integrate $\Psi(...)$ over circles $|z| = r, r \in (0, 1)$. $\I_n$ is the weighted integral over $r \in (0, 1)$ of such integrals.
    It turns out that $\int_{|z|=r} \Psi(...) |dz|$ has a fascinating property that has been discovered during numerical experiments: if we fix the values of $a_k$ and minimize 
    the integral by varying $b_k$ only, {\it the minimum value will not depend on the initial choice of $a_k$.}
     We show why it happens and what  is the optimal choice of $b_k$ for given values $a_k$.
     Further we prove that exactly the equispaced arrangement of $a_k$ makes it so that the single choice $b_k = 0,$ $k=0,\dots, n-1, $ minimizes all circular integrals \textit{at the same time}. Note that other arrangements of $a_k$ may suggest different optimal arrangements of $b_k$ for different circles.

    \section{Minimization on cirles} \label{circles1}

    In this section, we obtain the central result necessary for the proof of Theorem \ref{tgen}.

    \textbf{Terminology.} As usual, $\T = \{z | z \in \C, |z| = 1\}$. For an inner function in the unit disk $g$, analytic on $\ov\D$, define its \textit{degree} $m$ to be such natural number that under the smooth map $\Theta \to \arg g(e^{i\Theta})$ one has $\arg g(e^{i(\Theta + 2\pi)}) = \arg g(e^{i\Theta}) + 2\pi m$. Note that the degree of a finite Blaschke product equals the number of terms in the product. Moreover, any inner function $f$ in the unit disk, analytic on $\ov\D$, is a finite Blaschke product up to a unimodular constant factor. Indeed, $f$ may only have a finite number of zeros in $\D$. Thus, after dividing $f$ by the Blaschke product with the same zeros we get a function $g$ which modulus equals $1$ on $\T$ and it has no zeros in $\D$. It imlpies that $g$ must equal a unimodular constant.

    \begin{lemma}
        \label{Clema}
        Let
        \[T(z) = \prod_{j=1}^n \frac{z - s_j}{1 - \ov{s_j} z},\]
        $|s_j| < 1, j = 1 .. n$ be a Blaschke product (up to a unimodular constant). Then for any function $f : \T \to \C, f \in L^1(\T)$, all numbers
        \[C_k = \int_\T \frac{f(T(z))}{1 - \ov{s_k} z} |dz|\]
        are pairwise equal, i.e. $\exists C : C_k = C\ \forall k = 1..n$.
    \end{lemma}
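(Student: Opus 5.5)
The plan is to push the integral forward under the covering map $T:\T\to\T$ and show that the resulting density against $f(w)\,|dw|$ does not depend on $k$. Since $T$ is a finite Blaschke product of degree $n$, it is analytic on a neighbourhood of $\ov\D$. On $\T$ we have
\[
\frac{zT'(z)}{T(z)}=\sum_{j=1}^n\frac{1-|s_j|^2}{|z-s_j|^2}>0 .
\]
Consequently $|T'(z)| = zT'(z)/T(z)$ on $\T$, $T'$ has no zeros on $\T$, and $T|_\T$ is an $n$-sheeted smooth covering. For every $w\in\T$ the equation $T(z)=w$ therefore has exactly $n$ simple roots $z_1(w),\dots,z_n(w)$ on $\T$. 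Changing variables $w=T(z)$ on each sheet, with $|dz|=|dw|/|T'(z)|$, is legitimate for $f\in L^1(\T)$ by Fubini and Tonelli applied on the finitely many sheets. It gives
\[
C_k=\int_\T f(w)\,G_k(w)\,|dw|,\qquad G_k(w)=\sum_{T(z)=w}\frac{1}{(1-\ov{s_k}z)\,|T'(z)|}=w\sum_{T(z)=w}\frac{1}{z(1-\ov{s_k}z)\,T'(z)} .
\]
It therefore suffices to prove that $G_k(w)$ is independent of $k$.

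To compute $G_k(w)$, fix $w\in\T$ and consider the rational function
\[
h(z)=\frac{1}{z\,(1-\ov{s_k}z)\,(T(z)-w)} .
\]
At each simple root $z_i$ of $T(z)=w$, the residue of $h$ is exactly $1/\big(z_i(1-\ov{s_k}z_i)T'(z_i)\big)$. Hence $G_k(w)/w$ equals the sum of the residues of $h$ at these points. I will then apply the residue theorem on the Riemann sphere, which says the sum of all residues, including the one at $\infty$, is zero, and locate the remaining poles of $h$.

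\begin{itemize}
\item At $z=0$: here $T(0)=\prod_j(-s_j)\in\D$, so $T(0)\neq w$. If $s_k\neq0$, the residue is $1/(T(0)-w)$. If $s_k=0$, then $T(0)=0$ and the residue is $-1/w=1/(T(0)-w)$ again.
\item At $z=1/\ov{s_k}$ (when $s_k\ne0$): this point is a pole of $T$. Hence $1/(T-w)$ vanishes there to order at least $1$, which cancels the simple zero of $1-\ov{s_k}z$, so $h$ has no pole there.
\item At $z=\infty$: if $s_k\neq0$, then $T(z)\to T(\infty)$ with $T(\infty)\neq w$ because $|T(\infty)|>1$, so $h=O(z^{-2})$. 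If $s_k=0$, the numerator of $T$ has degree exceeding that of the denominator, so $T(z)\to\infty$ and again $h=o(z^{-1})$. In both cases the residue at $\infty$ is $0$.
\end{itemize}

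Summing, the residue theorem gives $G_k(w)=w\cdot\big(-1/(T(0)-w)\big)=\dfrac{w}{w-T(0)}$, which does not depend on $k$. Hence $C_k=\int_\T f(w)\,\frac{w}{w-T(0)}\,|dw|$ for all $k$, which proves the lemma. As a byproduct, this identifies the common value $C$.

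I expect the only delicate points to be bookkeeping rather than conceptual. The first is the degenerate cases, namely $s_k=0$ and repeated zeros $s_j=s_k$, in the residue count at $0$, $1/\ov{s_k}$ and $\infty$. The second is justifying the change of variables for a general $L^1$ function. That justification rests on the positivity of $zT'/T$ on $\T$, which makes $T$ a genuine $n$-to-$1$ local diffeomorphism of the circle.
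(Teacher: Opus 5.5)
Your proof is correct. It shares the paper's first step but evaluates the key sum by a different method.

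\textbf{What is shared.} The paper also splits $\T$ into $n$ arcs on which $T$ is invertible and pushes the integral forward. Since $r_j'(\xi)=1/T'(r_j(\xi))$, its integrand $\sum_j \frac{r_j'(\xi)}{r_j(\xi)(1-\ov{s_k}r_j(\xi))}$ is exactly your sum $\sum_{T(z)=w}\frac{1}{z(1-\ov{s_k}z)T'(z)}$.

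\textbf{Where the routes differ.} To show this sum is independent of $k$, the paper writes it as the logarithmic derivative of $\prod_j r_j(\xi)\big/\prod_j\bigl(1/\ov{s_k}-r_j(\xi)\bigr)$. It then observes that the $r_j(\xi)$ are the roots of an explicit monic polynomial $p$, so $\prod_j\bigl(1/\ov{s_k}-r_j(\xi)\bigr)=p(1/\ov{s_k})$ depends on $k$ only through a constant factor. You instead apply the residue theorem on the Riemann sphere to $h$.

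\textbf{What each buys.} The paper's route is shorter and purely algebraic, with no pole bookkeeping at $0$, $1/\ov{s_k}$ and $\infty$. However, it divides by $\ov{s_k}$, so the case $s_k=0$ needs a separate (if easy) check. It also works with branches of logarithms and asserts without proof that $\theta\mapsto\arg T(e^{i\theta})$ is strictly increasing. Your route treats $s_k=0$ and repeated zeros uniformly. It justifies the covering and the change of variables for $L^1$ functions through the positivity of $zT'/T$ on $\T$. It also identifies the common value explicitly as $C=\int_\T f(w)\,\frac{w}{w-T(0)}\,|dw|$. This agrees with the paper's computation carried to the end, because $\prod_j r_j(\xi)/p(1/\ov{s_k})$ is a constant multiple of $\xi-T(0)$.

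\textbf{One small imprecision.} In your bullet at $\infty$ with $s_k\neq0$, if some other $s_j=0$ then $T(\infty)=\infty$ rather than a finite point of modulus greater than $1$. The bound $h=O(z^{-2})$ still holds in that case, so your conclusion is unaffected.
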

    \begin{proof}
        Note that the degree of $T$ is $n$ and the map $\Theta \mapsto \arg(T(e^{i\Theta}))$ can be expressed as a smooth strictly increasing function.
        Thus we can split $\T$ into precisely $n$ arcs $U_j = \{e^{i\Theta} | \Theta \in [\Theta_{j-1}, \Theta_j)\}$ such that $T(U_j) = \T$ and $T$ is invertible on any of $U_j$. Let the inverse of $T$ on $U_j$ be $r_j : \T \to U_j$.

        Using this we can rewrite the definition for $C_k$:
        \begin{samepage}
            \[C_k = \int_\T \frac{f(T(z))}{(1 - \ov{s_k} z) i z} dz = \sum_{j=1}^n \int_\T \frac{f(\xi) r_j'(\xi)}{(1 - \ov{s_k} r_j(\xi)) i r_j(\xi)} d\xi\]
            \[= \frac1i \int_\T f(\xi) \sum_{j=1}^n \l(\frac{\ov{s_k}}{1 - \ov{s_k}r_j(\xi)} + \frac1{r_j(\xi)}\r)r_j'(\xi) d\xi\]
            \[= \frac1i \int_\T f(\xi) d\Bigg(-\ln\Bigg(\prod_{j=1}^n [1 / \ov{s_k} - r_j(\xi)]\Bigg) + \ln\Bigg(\prod_{j=1}^n r_j(\xi)\Bigg)\Bigg).\]
        \end{samepage}
        Now it is  sufficient to prove that the logarithmic derivative of $\prod_{j=1}^n \l[1 / \ov{s_k} - r_j(\xi)\r]$ with respect to $\xi$ does not depend on $k$. Indeed, notice that $r_1(\xi) .. r_n(\xi)$ are precisely all the roots of the monic polynomial
        \[p(z) = \frac1{1 - \xi \prod(-\ov{s_j})} \Bigg(\prod_{j=1}^n (z - s_j) - \xi \prod_{j=1}^n (1 - \ov{s_j} z)\Bigg).\]
       Hence,
        \[\prod_{j=1}^n \l[1 / \ov{s_k} - r_j(\xi)\r] = p(1/\ov{s_k}) = \frac{\prod_{j=1}^n (1/\ov{s_k} - s_j)}{1 - \xi \prod_{j=1}^{n}(-\ov{s_j})}\]
        which depends on $k$ purely by a constant factor.
    \end{proof}

    \begin{theorem}
        \label{circmin}
        Let $\a \in \C^n, |\a_k| > 1, k = 1..n$. Let $\Phi : \R \to \R$ be any convex even function. Let
        \[L_n(\a,\b) = \int_\T \Phi(|P_n(\a, \b, z)|) |dz|,\]
        where $\b \in \C^{n-1}$ and
        \eq{P_n(\a,\b,z) = \frac{\prod_{k=1}^{n-1} (z - \b_k)}{\prod_{k=1}^n (z - \a_k)}.}P
        Then,
        \[\min_{\b \in \C^{n-1}} L_n(\a,\b) = \int_\T \Phi\Bigg(\frac1{|z - \prod_{k=1}^n|\a_k||}\Bigg)|dz|.\]
    \end{theorem}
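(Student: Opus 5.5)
The plan is to prove matching lower and upper bounds by a convex-duality (variational) argument on $\T$. The dual certificate will be built from a function of the Blaschke product $T$ with zeros $s_k=1/\ov{\a_k}$, and Lemma \ref{Clema} will supply the orthogonality relations.

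\textbf{Reduction to the disk picture.} For $|z|=1$ we have $|z-\a_k| = |\a_k|\,|1-\ov{s_k}z|$ with $s_k = 1/\ov{\a_k}\in\D$. Put $A=\prod_k|\a_k|$. Then on $\T$
\[|P_n(\a,\b,z)| = \frac{|q(z)|}{A\,\prod_{k=1}^n|1-\ov{s_k}z|},\qquad q(z)=\prod_{k=1}^{n-1}(z-\b_k).\]
Since $\deg q\le n-1$, partial fractions give
\[F(z):=\frac{q(z)}{A\prod_k(1-\ov{s_k}z)}=\sum_{k=1}^n \frac{c_k}{1-\ov{s_k}z}.\]
The requirement that $q$ be monic of degree $n-1$ is a single affine constraint on $(c_k)$. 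The set of admissible $F$ is therefore an affine subspace $F_0+V$, where $V$ consists of the $\sum c_k/(1-\ov{s_k}z)$ whose numerator has degree $\le n-2$. This is one linear condition on $(c_k)$. Assume first that all $s_k\neq 0$; the case of a vanishing $s_k$ (i.e. $\a_k=\infty$) cannot occur, and coincident $s_k$ are handled at the end by continuity. In that case the condition reads $\sum_k c_k/\ov{s_k}=0$.

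\textbf{Lower bound via a supporting functional.} For convex even $\Phi$, the function $w\mapsto\Phi(|w|)$ is convex on $\C$. Hence for any candidate $F_0$ with a measurable subgradient selection $u(z)\in\p\Phi(|\cdot|)(F_0(z))$ we have
\[\int_\T\Phi(|F|)\,|dz|\ \ge\ \int_\T\Phi(|F_0|)\,|dz| + \mathrm{Re}\int_\T \ov{u}\,(F-F_0)\,|dz|.\]
So it suffices to find an admissible $F_0$ such that $\int_\T \ov u\, v\,|dz|=0$ for all $v\in V$, and such that $\int_\T\Phi(|F_0|)|dz|$ equals the claimed value. Here Lemma \ref{Clema} enters. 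If $\ov u(z)$ has the form $f(T(z))\cdot m(z)$, with $m$ a fixed simple factor (I would try $m(z)=\ov z$ or $m\equiv 1$), then the pairing of $\ov u$ with $1/(1-\ov{s_k}z)$ is, up to an explicit $k$-dependent affine correction coming from $\ov z/(1-\ov s z)=\ov z+\ov s/(1-\ov s z)$, a constant $C$ independent of $k$. The pairing with $v$ then collapses to a multiple of the single linear form defining $V$, hence vanishes. I would tune $m$ so that the $k$-dependence produced by Lemma \ref{Clema} matches exactly the weights $1/\ov{s_k}$ in the constraint.

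\textbf{Choice of extremal and evaluation.} The extremal $F_0$ should make $|F_0(z)|$ a function of $T(z)$ times a fixed factor, so that $u=\Phi'(|F_0|)F_0/|F_0|$ has the required structure. Concretely, I would seek $F_0$ with $|F_0(z)| = |T'(z)|^{\gamma}\,g(T(z))$ for suitable $\gamma$ and $g$, guided by the degree-one case. For $n=1$ there are no $\b$'s, and the value is exactly $\int_\T\Phi(1/|z-A|)\,|dz|$ after a rotation $\a_1\mapsto|\a_1|$. The cleanest expected mechanism is the following. Substituting $\xi=T(z)$, the $n$ branches $r_j$ from the proof of Lemma \ref{Clema} cover $\T$ $n$ times. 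The relation $\prod_j(1/\ov{s_k}-r_j(\xi))=p(1/\ov{s_k})$ computed there shows that the data of the $n$ poles collapse to the single quantity $\prod_j \ov{s_j}=\pm 1/\ov{\prod_j \a_j}$, whose modulus is $1/A$. This is how $\int_\T\Phi(|F_0|)$ reduces to the one-pole integral with pole of modulus $A$. The same explicit $F_0$ has a monic numerator of degree $n-1$, so it is attained by some $\b$, which gives the upper bound and shows the minimum is achieved.

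\textbf{Main obstacle.} The hard step is pinning down the exact extremal $F_0$ and the weight $m$ so that two things hold simultaneously: $F_0$ is genuinely of the admissible form, and the orthogonality from Lemma \ref{Clema} matches the constraint $\sum c_k/\ov{s_k}=0$. I would first work out $n=2$ by hand to guess the closed form of $F_0$ in terms of $T$ and $\prod_j(1-\ov{s_j}z)$. I would then verify admissibility by a degree count, and orthogonality by the computation in Lemma \ref{Clema}. Non-smooth $\Phi$ is handled by taking any measurable subgradient, and coincident $s_k$ by continuity of both sides in $\a$.
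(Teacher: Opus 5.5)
Your overall mechanism is right, and it is the one the paper uses for the minimality half: $w\mapsto\Phi(|w|)$ is convex, the admissible functions form an affine family, and Lemma~\ref{Clema} makes the pairings of the gradient with the partial-fraction basis independent of $k$. But there is a genuine gap. The proposal stops exactly where the content of the theorem lies: you never construct the extremal $F_0$. So you have neither the upper bound (an admissible $\b$ attaining the value) nor the value in the lower bound.

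\textbf{The missing extremal.} In your monic normalization it is
\[
P_n(\a,\t\b,z)=\frac{\ov\lambda\,T(z)-1}{(A^2-1)\,z},\qquad \ov\lambda=(-1)^n\prod_{k}\ov{\a_k}.
\]
Its numerator is $q_0(z)=(A^2\prod_k(z-s_k)-\prod_k(z-\a_k))/((A^2-1)z)$. This is a monic polynomial of degree $n-1$: the bracket has leading coefficient $A^2-1$ and vanishes at $0$ precisely because $\ov\lambda T(0)=1$. It is the paper's $Q^0$, written there with the reflected numerator $\prod(\ov{\b_k}z-1)$.

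On $\T$ one has $|P_n(\a,\t\b,z)|=|\ov\lambda T(z)-1|/(A^2-1)$, a function of $T$ alone. So the exponent of $|T'|$ in your ansatz must be $0$; any nonzero power would destroy the form $\ov u=m\cdot f(T)$ that Lemma~\ref{Clema} needs.

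The value follows because a finite Blaschke product pushes arc length forward to harmonic measure at $T(0)$, and $|T(0)|=1/A$. The paper realizes this by writing the extremal as $1/(R^0(z)-\prod\a_k)$, with $R^0$ a Blaschke product satisfying $R^0(0)=0$, and then using the mean value property of the harmonic extension. The identity $\prod_j(1/\ov{s_k}-r_j(\xi))=p(1/\ov{s_k})$ that you point to does not compute this integral. It only produces the $k$-independence inside Lemma~\ref{Clema}.

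\textbf{The weight $m$.} Neither of your candidates works in your normalization. For the extremal above, $F_0$ is a unimodular multiple of $P_n(\a,\t\b,\cdot)$, and on $\T$
\[
\ov{P_n(\a,\t\b,z)}=z\,(\lambda/T(z)-1)/(A^2-1),
\]
so $\ov u=z\,f(T(z))$, i.e.\ $m(z)=z$. The identity $z/(1-\ov{s_k}z)=\ov{s_k}^{-1}(1/(1-\ov{s_k}z)-1)$ then gives pairings $(C-D)/\ov{s_k}$, where $C$ is the constant from Lemma~\ref{Clema} and $D=\int_\T f(T(z))\,|dz|$. These vanish exactly on $V=\{\sum_k c_k/\ov{s_k}=0\}$.

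By contrast, $m\equiv1$ gives $k$-independent pairings. Those match only the constraint $\sum_k v_k=1$, which arises after the paper's reflection of the numerator. And $m=\ov z$ gives $D'+\ov{s_k}C$, which is not proportional to $1/\ov{s_k}$.

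Once $F_0$ and $m$ are supplied, your subgradient argument closes. Take $u=\Phi'_+(|F_0|)F_0/|F_0|$; this is legitimate since $|F_0|\ge 1/(A+1)$ on $\T$, and it handles non-smooth $\Phi$. The result is a valid variant of the paper's stationarity argument in the unreflected normalization.
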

    \begin{proof}
        Because $|\b_k - z| = |z\ov{\b_k} - 1|$ for $z \in \T$, we can replce $P_n$ in $L_n$ with
        \eq{Q(\a,\b,z) = \frac{\prod_{k=1}^{n-1}(z\ov{\b_k} - 1)}{\prod_{k=1}^n(z - \a_k)},}Q
        that is $|Q(\a,\b,z)| = |P_n(\a,\b,z)|$,  $z \in \T$.

        We use the following identity:
        \eq{\frac1{z - p} = -\frac1{|p|^2 - 1}\l(\ov p + \frac{\ov p}p \mu(1/\ov p, z)\r),}{recshift}
        where $p$ is any complex constant and
        \[\mu(c,z) = \frac{z - c}{1 - \ov c z}\]
        is the Möbius transform (for $|c| < 1$). The identity may be checked by direct computation. 
        It is also true that $\mu(c, \mu(-c, z)) = z$.

        For a fixed $\a\in\mathbb C^n$, we set
        \eq{Q^0(\a,z) = -\frac1{\prod|\a_k|^2 - 1}\l(\prod\ov{\a_k} - \prod\frac{\ov{\a_k}}{\a_k} \cdot R^*(\a,z)\r),}{Q*}
        where
        \[R^*(\a,z) = \prod_{k=1}^{n} \frac{z - 1/\ov{\a_k}}{z/\a_k - 1}.\]

        Expanding (\ref{Q*}) we can show that $Q^0$ has the same form as $Q$. Indeed, from
        \[Q^0(\a,z) = \frac{\prod_{k=1}^{n} (z \ov \a_k - 1) - \prod_{k=1}^{n} \ov \a_k (z - \a_k)}{(\prod_{k=1}^{n}|\a_k|^2 - 1)\prod_{k=1}^{n} (z - \a_k)},\]
        we  see that the polynomial
        \[q(z) = \frac{\prod_{k=1}^{n} (z \ov \a_k - 1) - \prod_{k=1}^{n} \ov \a_k (z - \a_k)}{\prod_{k=1}^{n}|\a_k|^2 - 1}\]
        is of degree at most $n-1$ and its constant term equals $(-1)^{n-1}$. Therefore it can be expressed as $\prod_{k=1}^{n-1}\big(\ov{\t\b_k}z - 1\big)$ with some values $\t\b_k\in\mathbb C$ depending on $\a,$ and we have $$Q^0(\a,z) = Q(\a,\t\b,z).$$ Moreover, $q(\a_k) \neq 0, k = 1..n$, thus $Q(\a, \t\b, \cdot)$, as defined in (\ref Q), is an irreducible fraction and $\{1/\ov{\t\b_k} | \t\b_k \neq 0\}$ are precisely the zeros of $Q^0(\a, \cdot)$, taking into account the multiplicities.
        
        Let us prove that 
        \[\min_{\b \in \C^{n-1}} L_n(\a,\b) =L_n(\a, \t\b) .\] 
         
        First, we show that
        \[ L_n(\a,\t\b) = \int_\T \Phi\Bigg(\frac1{|z - \prod_{k=1}^n|\a_k||}\Bigg)|dz|.\]

        Define
        \eq{R^0(\a,z) = \mu\big(-\prod_{k=1}^{n} 1/\ov{\a_k}, -R^*(\a, z)\big).}{R0}
        Then, 
        \[R^*(\a,z) = -\mu\big(\prod_{k=1}^{n} 1/\ov{\a_k}, R^0(\a, z)\big).\]
        Substituting this into (\ref{Q*}) we obtain the expression of a form similar to the one on the right-hand side of (\ref{recshift}). So, from  (\ref{recshift}) we obtain
        \eq{Q^0(\a,z) = \frac1{R^0(\a, z) - \prod \a_k}.}{Q0}

        Because $\mu(c, \cdot)$ is of degree $1$ and $R^*$ is of degree $n,$ we  conclude from (\ref{R0}) that $R^0$ must also be of degree $n$. Plugging $0$ into the definition of $R^0$ it is also easy to see
         that the latter has a root at origin. Hence, the representation
        \[R^0(\a,z) = W z \prod_{k=1}^{n-1} \frac{z - \h\b_k}{\ov{\h\b_k}z - 1}\]
        holds for some values $|\h\b_k| < 1$ and $|W| = 1$. Substituting it into (\ref{Q0}) we get
        \[Q^0(\a, z) = \frac{\prod\big(\ov{\h\b_k} z - 1\big)}{W z \prod \big(z - \ov{\h\b_k}\big) - \prod\big(\ov{\h\b_k} z - 1\big) \cdot \prod \a_k}.\]
        We can see that $\{1/\ov{\h\b_k} | \h\b_k \neq 0\}$ are precisely all zeros of $Q^0(\a, \cdot)$ accounting for multiplicity, much like for $\t\b$. So we can write $\h\b = \t\b$. Comparing the numerator's constant term and the denominator's leading coefficient to those of (\ref Q), we can also conclude that $W = 1$. Therefore, we have
        \[Q^0(\a,z) = \Bigg(z\prod_{k=1}^{n-1}\frac{z - \t\b_k}{\ov{\t\b_k}z - 1} - \prod_{k=1}^n \a_k\Bigg)^{-1}.\]

        Because $R^0$ is a Blaschke product with a root at the origin, for any continuous \mbox{$g: \T \to \R$}, we have
        \[\int_\T g(R^0(\a, z)) |dz| = \int_\T g(z) |dz|.\]
        Indeed, define $g^*$ to be the harmonic continuation of $g$ into $\D$. We see that $g(R^0(\a, \cdot))$ is also harmonic and 
        \[\int_\T g(R^0(\a, z)) |dz| = 2\pi g^*(R^0(\a, 0)) = 2\pi g^*(0) = \int_\T g(z) |dz|.\]
        It means that we can replace $R^0$ under the integral defining $L_n(\a, \t\b)$ with  $z$ and get
        \[L_n(\a,\t\b) = \int_\T\Phi\Bigg(\frac1{|R^0(\a, \t\b, z) - \prod \a_k|}\Bigg) |dz| = \int_\T\Phi\Bigg(\frac1{|z - \prod|\alpha_k||}\Bigg)|dz|.\]

        Now it's left to prove that this is indeed the minimum of $L_n(\a, \b)$ over all $\b\in\mathbb C^{n-1}$ for a fixed $\a \in\mathbb C^n.$

        Without loss of generality we assume that $\Phi$ is continuously differentiable. Otherwise, we uniformly approximate $\Phi$ by a sequence of even convex functions $\Phi_m \in C^1(\R)$. WLOG, we can also assume that the poles $\a_k$ are distinct. Otherwise, we approximate given configuration of poles by another one consisting of pairwise distinct points and  use the continuity of the maps $\a \to \t\b$ and $\a, \b \to L_n(\a, \b)$.

        Decompose $Q(\a, \b, z)$ into partial fractions:
        \[Q(\a, \b, z) = \sum_{k=1}^n \frac{\omega_k}{z - \a_k}.\]
        Compare the constant term in the numerator in (\ref{Q}) with the same one in the fraction obtained by recombining the sum:
        \[(-1)^{n-1} = (-1)^{n-1} \sum_{k=1}^n \frac{\omega_k}{\a_k} \prod_{j=1}^n \a_j.\]
        Let $v_k = \omega_k/\a_k$. We have:
        \[Q(\a,\b,z) = \sum_{k=1}^n v_k \frac{\a_k}{z - \a_k} \cdot \prod \a_j,\]
        \eq{\sum_{k=1}^n v_k = 1.}{vsum}
        The condition (\ref{vsum}) defines the space of all possible values for the vector $v = \{v_k\}_{k=1}^n$, and
        there is a bijection between the space $\mathbb C^{n-1}$ of values of $\b$
        and the space of all $v = \{v_k\}_{k=1}^n\in\mathbb C^n$ satisfying  (\ref{vsum}).
        This allows us to speak about minimality of $L_n$ expressed in terms of $v$ rather that $\b$. Let $Q_v(\a, v, z) = Q(\a, \b, z)$ be the expression of $Q$ in terms of $v$. $|Q_v|$ is a convex function of $v$ and $\Phi(|Q_v|)$ is a convex function of $v$. Thus $L_n$ is also convex in terms of $v$, and it is sufficient to show that $\t v$ is a stationary point, where  $ v=\t v$ corresponds to $\b = \t\b$. 

        Let $dv$ be a variation of $v$ at $\t v$. Then
        \[dL = \sum_{k=1}^n \l(dv_k \int_\T \frac{\p \Phi(|Q_v|)}{\p v_k}|dz| + c.c.\r)\]
        \[= \sum_{k=1}^n \l(dv_k \prod \frac1{\a_j} \int_\T \Phi'(|Q_v|) \frac{\ov{Q_v}}{2 |Q_v|} \frac{\a_k}{z - \a_k} |dz| + c.c.\r)\]
        \[= \sum_{k=1}^n \Bigg(dv_k \prod \frac1{\a_j} \int_\T \frac{H(Q_v)}{\ov{1/\ov{\a_k}}z - 1} |dz| + c.c.\Bigg),\]
        where $c.c.$ stands for complex conjugate of the first summand and
        \[H(Q_v) = \Phi'(Q_v)\frac{\ov{Q_v}}{2|Q_v|}.\]
        Because $Q_v(\a, \t v, \cdot)$ can be expressed as a function of $R^*(\a, z)$, which is a Blaschke product with zeros at $1/\ov{\a_k}$ (up to a unimodular factor), we can apply Lemma \ref{Clema} and get
        \[\exists C : \int_\T \frac{H(Q_v)}{\ov{1/\ov{\a_k}}z - 1} |dz| = C\ \forall k = 1..n.\]
        From (\ref{vsum}) we  see that $\sum dv_k = 0$. Hence, $dL = 0$.

    \end{proof}

    \section{Proof of Theorem 1}

    We use notations  $a = (a_k)_{k=1}^n$, $b = (b_k)_{k=1}^{n-1}$. Express $\I_n$ in terms of circular integrals:
    \[\I_n(a, b) = \int_0^1 \rho(r) L_n^r(a, b) dr,\]
    where \[L_r(a, b) = \int_{|z|=r} \Psi\l(\l| \frac{\prod_{k=1}^{n-1} (z - b_k)}{\prod_{k=1}^n (z - a_k)} \r|\r) |dz|.\]
    Obviously, if (\ref{condb}) is the minimality condition for $L_r\ \forall r \in (0, 1)$, then it is the minimality condition for $\I_n$.

    For a fixed value of $r$, we set $\a_k = a_k/r$, $\b_k = b_k/r$ and $\Phi(x) = \Psi(x/r)$. Then,
    \[L_r(a, b) = \int_\T \Psi\l(\l|\frac{\prod_{k=1}^{n-1}(rz - b_k)}{\prod_{k=1}^n(rz - a_k)}\r|\r) r |dz| = r \int_\T \Phi\l(|P_n(\a, \b, z)|\r) |dz|,\]
    where $P_n$ is defined in (\ref P).
    Plugging (\ref{condb}) into this, we get
    \[\frac{L_r(a, b)}r = \int_\T \Phi\l(\l|\frac{z^{n-1}}{z^n + (-r)^{-n}}\r|\r) |dz| = \int_\T \Phi\l(\frac1{|z - r^{-n}|}\r) |dz|,\]
    which is the minimal value according to Theorem \ref{circmin}. This finishes the proof of Theorem \ref{tgen}.

    The author is grateful to Natalia Abuzyarova for contibuting to the text and helpful editorial revisions.

    \printbibliography
\end{document}